\DeclareMathSymbol{\N}{\mathbin}{AMSb}{"4E}
\DeclareMathSymbol{\Z}{\mathbin}{AMSb}{"5A}
\newtheorem{theorem}{Theorem}[section]
\newtheorem{lemma}[theorem]{Lemma}
\newtheorem{proposition}[theorem]{Proposition}
\theoremstyle{definition}
\theoremstyle{remark}
\numberwithin{equation}{section}
\begin{document}


\title{The Inverse Problem for Canonically Bounded Rank-one Transformations}

\author{Aaron Hill}
\address{Department of Mathematics\\ University of Louisville\\  Louisville, KY 40292}
\email{aaron.hill@louisville.edu}
\thanks{The author acknowledges the US NSF grant DMS-0943870 for the support of his research.}

\subjclass[2010]{Primary 37A05, 37A35}
\date{July 3, 2015 and, in revised form, December 11, 2015.}

\keywords{rank-one transformation, isomorphic, canonically bounded}

\begin{abstract}
Given the cutting and spacer parameters for a rank-1 transformation, there is a simple condition which is easily seen to be sufficient to guarantee that the transformation under consideration is isomorphic to its inverse.  Here we show that if the cutting and spacer parameters are canonically bounded, that condition is also necessary, thus giving a simple characterization of the canonically bounded rank-1 transformations that are isomorphic to their inverse.  
\end{abstract}

\maketitle

\section{Introduction}

\subsection{Background}
A measure-preserving transformation is an automorphism of a standard Lebesgue space.  Formally, it is a quadruple $(X, \mathcal{B}, \mu, T)$, where 
\begin{enumerate}
\item  $(X, \mathcal{B}, \mu)$ is a measure space isomorphic to the unit interval with the Lebesgue measure on all Borel sets,
\item $T$ is a bijection from $X$ to $X$ such that $T$ and $T^{-1}$ are both $\mu$-measurable and preserve the measure $\mu$.
\end{enumerate}
When the algebra of measurable sets is clear, we will refer to the transformation $(X, \mathcal{B}, \mu, T)$ by $(X, \mu, T)$.  If $(X, \mathcal{B}, \mu, T)$ is a measure-preserving transformation, then so is its inverse, $(X, \mathcal{B}, \mu, T^{-1})$.

Two measure-preserving transformations $(X, \mathcal{B}, \mu, T)$ and $(X^\prime, \mathcal{B}^\prime, \mu^\prime, T^\prime)$ are isomorphic if there is a measure isomorphism $\phi$ from  $(X, \mathcal{B}, \mu)$ to $(X^\prime, \mathcal{B}^\prime, \mu^\prime)$ such that $\mu$ almost everywhere, $\phi \circ T = T^\prime \circ \phi $.    

One of the central problems of ergodic theory, originally posed by von Neumann, is the isomorphism problem:  How can one determine whether two measure-preserving transformations are isomorphic?  The inverse problem is one of its natural restrictions: How can one determine whether a measure-preserving transformation is isomorphic to its inverse?  

In the early 1940s, Halmos and von Neumann \cite{HalmosvonNeumann} showed that ergodic measure-preserving transformations with pure point spectrum are isomorphic iff they have the same spectrum.  It immediately follows from this that every ergodic measure-preserving transformation with pure point spectrum is isomorphic to its inverse.  
About a decade later, Anzai \cite{Anzai} gave the first example of a measure-preserving transformation not isomorphic to its inverse.  Later, Fieldsteel \cite{Fieldsteel} and del Junco, Rahe, and Swanson \cite{delJuncoRaheSwanson} independently showed that Chacon's transformation--one of the earliest examples of what we now call rank-1 transformations--is not isomorphic to its inverse.  In the late 1980s, Ageev \cite{Ageev3} showed that a generic measure-preserving transformation is not isomorphic to its inverse.  

In 2011, Foreman, Rudolph, and Weiss \cite{ForemanRudolphWeiss} showed that the set of ergodic measure-preserving transformations of a fixed standard Lebesgue space that are isomorphic to their inverse is a complete analytic subset of all measure-preserving transformations on that space.  In essence, this result shows that there is no simple (i.e., Borel) condition which is satisfied if and only if an ergodic measure-preserving transformation is isomorphic to its inverse.  However, in the same paper they show that the isomorphism relation becomes much simpler when restricted to the generic class of rank-1 transformations.  It follows from their work that there exists a simple (i.e., Borel) condition which is satisfied if and only if a rank-1 measure-preserving transformation is isomorphic to its inverse.  Currently, however, no such condition is known.  In this paper we give a simple condition that is sufficient for a rank-1 transformation to be isomorphic to its inverse and show that for canonically bounded rank-1 transformations, the condition is also necessary.

\subsection{Rank-1 transformations}
\label{comments}


In this subsection we state the definitions and basic facts pertaining to rank-1 transformations that will be used in our main arguments.  
We mostly follow the symbolic presentation in \cite{GaoHill1} and \cite{GaoHill2}, but also provide comments that hopefully will be helpful to those more familiar with a different approach to rank-1 transformations.  Additional information about the connections between different approaches to rank-1 transformations can be found in the survey article \cite{Ferenczi}.  

We first remark that by $\N$ we mean the set of all finite ordinals, including zero:  $\{0, 1, 2, \ldots \}$.

Our main objects of study are symbolic rank-1 measure-preserving transformations.  Each such transformation is a measure-preserving transformation $(X, \mathcal{B}, \mu, \sigma)$, where $X$ is a closed, shift-invariant subset of $\{0,1\}^\Z$, $\mathcal{B}$ is the collection of Borel sets that $X$ inherits from the product topology on $\{0,1\}^\Z$, $\mu$ is an atomless, shift-invariant (Borel) probability measure on $X$, and $\sigma$ is the shift.  To be precise, the shift $\sigma$ is the bijection from $\{0,1\}^\Z$ to $\{0,1\}^\Z$, where $\sigma (x) (i) = x (i+1)$.  Since the measure algebra of a symbolic measure-preserving transformation comes from the topology on $\{0,1\}^\Z$, we will omit the reference to that measure algebra and simply refer to a symbolic measure-preserving transformation as $(X, \mu, \sigma)$.

Symbolic rank-1 measure-preserving transformations are usually described by {\em cutting and spacer parameters}.  The cutting parameter is a sequence $(r_n : n \in \N)$ of integers greater than 1.  The spacer parameter is a sequence of tuples $(s_n : n \in \N)$, where formally $s_n$ is a function from $\{1, 2, \ldots, r_n -1\}$ to $\N$ (note that $s_n$ is allowed to take the value zero).  Given 
such cutting and spacer parameters, one defines the symbolic rank-1 system $(X, \sigma)$ as follows.  First define a sequence of finite words $(v_n : n \in \N)$ by $v_0 =0$ and $$v_{n+1} = v_n 1^{s_n(1)} v_n 1^{s_n(2)}v_n \ldots  v_n 1^{s_n(r_n-1)} v_n.$$  The sequence $(v_n: n \in \N)$ is called a {\em generating sequence}.  Then let $$X = \{x \in \{0,1\}^\Z: \text{ every finite subword of $x$ is a subword of some $v_n$}\}.$$  It is straightforward to check that $X$ is a closed, shift-invariant subset of $\{0,1\}^\Z$.  These symbolic rank-1 systems are treated extensively--as topological dynamical systems--in \cite{GaoHill1}.  In order to introduce a nice measure $\mu$ and thus obtain a measurable dynamical system, we make two additional assumptions on the cutting and spacer parameters.
\begin{enumerate}
\item  For every $N \in \N$ there exist $n, n^\prime \geq N$ and $0 < i < r_n$ and $0 < i^\prime < r_{n^\prime}$ such that $$s_n(i) \neq s_{n^\prime} (i^\prime).$$
\item   $\displaystyle \sup_{n \in \N}  \frac{ \text{\# of 1s in $v_n$}}{|v_n|} < 1  $
\end{enumerate}
It is straightforward to show that there is a unique shift-invariant measure on $X$ which assigns measure 1 to the set $\{x \in X: x (0) = 0\}$.  As long as the first condition above is satisfied, that measure is atomless.  As long as the second condition above is satisfied, that measure is finite.  Assuming both conditions are satisfied, the normalization of that measure is called $\mu$ and then $(X, \mu, \sigma)$ is a measure-preserving transformation.  We call such an $(X, \mu, \sigma)$ a symbolic rank-1 measure-preserving transformation.

Below are several important remarks about symbolic rank-1 measure-preserving transformations that will be helpful in understanding the arguments in Section 2.
\begin{itemize}

\item  {\em Bounded rank-1 transformations:}  Suppose $(r_n: n \in \N)$ and $(s_n: n \in \N)$ are cutting and spacer parameters for $(X, \mu, \sigma)$.  We say the cutting parameter is bounded if there is some $R \in \N$ such that for all $n \in \N$, $r_n \leq R$.  We say that the spacer parameter is bounded if there is some $S \in \N$ such that for all $n \in \N$ and all $0<i < r_n$, $s_n(i) \leq S$.

Let $(X, \mu, \sigma)$ be a symbolic rank-1 measure-preserving transformation.  We say that $(X, \mu, \sigma)$ is bounded if there are cutting and spacer parameters $(r_n: n \in \N)$ and $(s_n: n \in \N)$ that give rise to $(X, \mu, \sigma)$ that are both bounded. 

\item {\em Canonical cutting and spacer parameters:} There is an obvious bijective correspondence between cutting and spacer parameters and generating sequences, but there are many different generating sequences that give rise to the same symbolic rank-1 system.  For example, any proper subsequence of a generating sequence will be a different generating sequence that gives rise to the same symbolic rank-1 system.  There is a way, however, described in \cite{GaoHill1}, to associate to each symbolic rank-1 system a unique canonical generating sequence, which in turn gives rise to the canonical cutting and spacer parameters of that symbolic system.  The canonical generating sequence was used in \cite{GaoHill1} to fully understand topological isomorphisms between symbolic rank-1 systems; it was also used in \cite{GaoHill2} to explicitly describe when a bounded rank-1 measure-preserving transformation has trivial centralizer.

There is only one fact that about canonical generating sequences that is used in our argument.  It is this: If $(r_n: n \in \N)$ and $(s_n: n \in \N)$ are the canonical cutting and spacer parameters for $(X, \mu, \sigma)$, then for all $n \in \N$, there is $0<i<r_n$ and $0<j<r_{n+1}$ such that $s_n(i) \neq s_n(j)$.  (See the definition of canonical generating sequence in section 2.3.2 and 2.3.3 in \cite{GaoHill1})

\item {\em Expected occurrences:} Let $(v_n: n \in \N)$ be a generating sequence giving rise to the symbolic system $(X, \sigma)$.   Then for each $n \in \N$, there is a unique way to view each $x \in X$ as a disjoint collection of occurrences of $v_n$ separated only by 1s.  Such occurrences of $v_n$ in $x$ are called {\em expected} and the following all hold.
\begin{enumerate}
\item  For all $x \in X$ and $n \in \N$, every occurrence of $0$ in $x$ is contained in a unique expected occurrence of $v_n$. 
\item  For all $x \in X$ and $n \in \N$, $x$ has an expected occurrence of $v_n$ beginning at position $i$ iff $\sigma (x)$ has an expected occurrence of $v_n$ beginning at position $(i-1)$.
\item  If $x \in X$ has an expected occurrence of $v_n$ beginning at position $i$,  and $n^\prime > n$, then the unique expected occurrence of $v_{n^\prime}$ that contains the 0 at position $i$ completely contains the expected occurrence of $v_n$ that begins at $i$.
\item  If $x \in X$ has expected occurrences of $v_n$ beginning at positions $i$ and $j$, with $|i - j| < |v_n|$, then $i=j$. In other words, distinct expected occurrences of $v_n$ cannot overlap.
\item  If $n>m$ and $x\in X$ has as expected occurrence of $v_n$ beginning at $i$ which completely contains an expected occurrence of $v_m$ beginning at $i + l$, then whenever $j$ is such that $x$ has an expected occurrence of $v_n$ beginning at $j$, that occurrence completely contains an expected occurrence of $v_m$ beginning at $j + l$.
\end{enumerate} 
For $n \in \N$ and $i \in \Z$ we define $E_{v_n,i}$ to be the set of all $x \in X$ that have an expected occurrence of $v_n$ beginning at position $i$.

\item  {\em Relation to cutting and stacking constructions:}  Let $(v_n: n \in \N)$ be a generating sequence giving rise to the symbolic rank-1 measure-preserving system $(X, \mu, \sigma)$.  One can take the cutting and spacer parameters associated to $(v_n: n \in \N)$ and build, using a cutting and stacking construction, a rank-1 measure-preserving transformation. This construction involves a sequence of Rokhlin towers.  There is a direct correspondence between the base of the $n$th tower in the cutting and stacking construction and the set $E_{v_n, 0}$ in the symbolic system.  The height of the $n$th tower in the cutting and stacking construction then corresponds to (i.e., is equal to) the length of the word $v_n$.  If the reader is more familiar with rank-1 transformations as cutting and stacking constructions, one can use this correspondence to translate the arguments in Section 2 to that setting.

\item  {\em Expectedness and the measure algebra:}  Let $(v_n: n \in \N)$ be a generating sequence giving rise to the symbolic rank-1 measure-preserving system $(X, \mu, \sigma)$.  If $\mathbb{M}$ is any infinite subset of $\N$, then the collection of sets $\{E_{v_n, i}: n \in \mathbb{M}, i \in \Z\}$ is dense in the measure algebra of $(X, \mu)$.  Thus if $A$ is any positive measure set and $\epsilon >0$, there is some $n \in \mathbb{M}$ and $i \in \Z$ such that $$\frac{\mu (E_{v_n, i} \cap A)}{\mu(E_{v_n, i}) } > 1 - \epsilon$$

\item {\em Rank-1 Inverses:} Let $(r_n: n \in \N)$ and $(s_n: n \in \N)$ be cutting and spacer parameters for the symbolic rank-1 measure-preserving transformation $(X, \mu, \sigma)$.  It is straightforward to check that a simple modification of the parameters results in a symbolic rank-1 measure-preserving transformation that is isomorphic to $(X, \mu, \sigma^{-1})$.  For each tuple $s_n$ in the spacer parameter, let $\overline{s_n}$ be the reverse tuple, i.e., for $0 < i < r_n$, $\overline{s_n } (i) = s_n (r_n -i)$. It is easy to check that the cutting and spacer parameters $(r_n: n \in \N)$ and $(\overline{s_n}: n \in \N)$ satisfy the two measure conditions necessary to produce a symbolic rank-1 measure-preserving transformation.  If one denotes that transformation by  $(\overline{X}, \overline{\mu}, \sigma)$  and defines $\phi : X \rightarrow \overline{X}$ by $\psi (x) (i) = x(-i)$, then it is straightforward to check that $\psi$ is an isomorphism between  $(X, \mu, \sigma^{-1})$ and $(\overline{X}, \overline{\mu}, \sigma)$.  Thus to check whether a given symbolic rank-1 measure-preserving transformation $(X, \mu, \sigma)$ is isomorphic to its inverse, one need only check whether it is isomorphic to the symbolic rank-1 measure-preserving transformation $(\overline{X}, \overline{\mu}, \sigma)$.
  
 \end{itemize}
 
 \subsection{The condition for isomorphism and the statement of the theorem}

Let $(r_n: n \in \N)$ and $(s_n: n \in \N)$ be cutting and spacer parameters for the symbolic rank-1 measure-preserving transformation $(X, \mu, \sigma)$.  Suppose that there is an $N \in \N$ such that for all $n \geq N$, $s_n = \overline{s_n}$.  Let $\phi : X \rightarrow \overline{X}$ be defined so that $\phi (x)$ is obtained from $x$ by replacing every expected occurrence of $v_N$ by $\overline{v_N}$ (the reverse of $v_N$).  It is straightforward to check that $\phi$ is an isomorphism between $(X, \mu, \sigma)$ and $(\overline{X}, \overline{\mu}, \sigma)$, thus showing that $(X, \mu, \sigma)$ is isomorphic to its inverse $(X, \mu, \sigma^{-1})$.  

As an example, Chacon2 is the rank-one transformation that can be defined by $v_{n+1} = v_n 1^n v_n$.   (In the cutting and stacking setting, Chacon2 is usually described by $B_{n+1} = B_n B_n 1$, but that is easily seen to be equivalent to $B_{n+1} = B_n 1^n B_n$.)  In this case $r_n = 2$ and $s_n(1)=n$, for all $n$.  Since $s_n = \overline{s_n}$ for all $n$, Chacon2 is isomorphic to its inverse.

\begin{theorem}
\label{theorem}
Let $(r_n: n \in \N)$ and $(s_n: n \in \N)$ be the canonical cutting and spacer parameters for the symbolic rank-1 measure-preserving transformation $(X, \mu, \sigma)$.  If those parameters are bounded, then $(X, \mu, \sigma)$ is isomorphic to $(X, \mu, \sigma^{-1})$ if and only if there is an $N \in \N$ such that for all $n \geq N$, $s_n = \overline{s_n}$.
\end{theorem}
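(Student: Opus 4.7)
\emph{Setup and strategy.} Since the sufficiency of the condition is already established in the discussion preceding the theorem, it remains to prove necessity. Let $\psi : (X, \mu, \sigma) \to (\overline{X}, \overline{\mu}, \sigma)$ be a measure-theoretic isomorphism. The plan is to use the density of expected occurrences, together with the boundedness of the parameters, to force $\psi$ to carry expected occurrences of $v_n$ in $X$ onto expected occurrences of $\overline{v_n}$ in $\overline{X}$ in an order-preserving fashion for all sufficiently large $n$, and then to read off $s_n = \overline{s_n}$ from the compatibility between consecutive levels. The canonical hypothesis will enter precisely at the point where one rules out a nonzero shift in level indexing.

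\emph{Matching levels and alignment.} Fix a small $\epsilon > 0$. For each sufficiently large $n$, applying the density statement in $\overline{X}$ to the positive-measure set $\psi(E_{v_n, 0})$ yields indices $m = m(n)$ and $j = j(n)$ such that $\overline{\mu}(E_{\overline{v_m}, j} \cap \psi(E_{v_n, 0})) > (1-\epsilon)\overline{\mu}(E_{\overline{v_m}, j})$. Boundedness of $(r_n)$ and $(s_n)$ gives uniform comparability $\mu(E_{v_n, 0}) \asymp 1/|v_n|$ (and similarly in $\overline{X}$), while the recursion $|v_{n+1}| = r_n |v_n| + \sum_i s_n(i)$ keeps $|v_{n+1}|/|v_n|$ uniformly bounded above and bounded below by $r_n \geq 2$. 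A measure comparison therefore confines $m(n) - n$ to a finite set; the analogous argument for $\psi^{-1}$ provides matching control in the reverse direction. Passing to a subsequence and then propagating the matching upward via property (3) of expected occurrences, after absorbing a single power of $\sigma$ into $\psi$ one may assume that $\psi(E_{v_n, 0})$ is $(1-\delta_n)$-contained in $E_{\overline{v_{n+c}}, 0}$ for all sufficiently large $n$, with $\delta_n \to 0$ and a single integer $c$.

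\emph{Rigidity and the role of canonicity.} The matching above means that $\psi$ sends expected $v_n$-blocks to expected $\overline{v_{n+c}}$-blocks in an order-preserving way, up to error $\delta_n$. The expected $v_{n+1}$-block is the concatenation of $r_n$ expected $v_n$-blocks with inter-block gaps $s_n(1), \dots, s_n(r_n-1)$, while the corresponding expected $\overline{v_{n+c+1}}$-block in the image is the concatenation of $r_{n+c}$ expected $\overline{v_{n+c}}$-blocks with gaps $s_{n+c}(r_{n+c}-1), \dots, s_{n+c}(1)$. Equating block counts, lengths, and spacing sequences gives $r_n = r_{n+c}$ together with a combinatorial identity relating $s_n$ to $\overline{s_{n+c}}$; the canonicity assumption that for every $n$ some value of $s_n$ differs from some value of $s_{n+1}$ is exactly the nondegeneracy needed to force $c = 0$, at which point the identity reduces to $s_n = \overline{s_n}$. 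The main obstacle will be promoting the approximate matching ($\delta_n \to 0$) to an honest set-theoretic match, so that the deduced equality is exact and not merely combinatorial in the limit; the standard route is to observe that once $\delta_n < 1/2$ the tower decomposition at level $n$ is already rigid enough that the approximate block-alignment must be exact.
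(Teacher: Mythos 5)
Your strategy---upgrading the measure-theoretic isomorphism to an order-preserving matching of towers and then reading off $s_n=\overline{s_n}$---is not the route the paper takes, and as written it has a genuine gap at exactly the point you flag as ``the main obstacle.'' The assertion that once $\delta_n<1/2$ the approximate block-alignment ``must be exact'' is unsupported, and it is really the whole difficulty: a measure-theoretic isomorphism only gives that \emph{most} expected occurrences of $v_n$ are carried to expected occurrences of $\overline{v_m}$, and the exceptional $\delta_n$-fraction can be distributed so as to destroy any literal matching. This is precisely the distinction the paper draws between the topological case (where one gets $E_{v_n,i}\subseteq A$) and the measure case (where one only gets density), and it is why the topological analysis of Gao--Hill does not transfer. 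There is also a direction problem earlier in your argument: the density lemma produces a set $E_{\overline{v_m},j}$ that is mostly \emph{contained in} $\psi(E_{v_n,0})$, whereas your alignment step needs the reverse containment of $\psi(E_{v_n,0})$ in $E_{\overline{v_m},j}$ up to measure $\delta_n$; passing from one to the other requires $\overline{\mu}(E_{\overline{v_m},j})$ to be nearly equal to $\mu(E_{v_n,0})$, which your comparability estimate (that $m(n)-n$ lies in a finite set) does not provide.

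Even granting an exact level-$n$ matching with $c=0$, the final step ``equating spacing sequences gives $s_n=\overline{s_n}$'' overlooks that the $v_{n+1}$-block boundaries need not align with the $\overline{v_{n+1}}$-block boundaries: the gap string $s_n(1),\dots,s_n(r_n-1)$ could occur inside $\overline{s_n},c,\overline{s_n}$ with a nonzero offset. The paper's proof is organized around exactly this point. It defines the incompatibility relation $s\perp s'$ (no occurrence of $s$ as a subsequence of $c*s'$ for any $c$), proves in Lemma \ref{lemma} that $s_2*s_1\perp\overline{s_2*s_1}$ whenever $s_1\neq\overline{s_1}$ and $s_2$ is non-constant---this is where canonicity enters, guaranteeing non-constancy of $s_{n+1}*s_n$, not (as in your sketch) to rule out a level shift $c$---and telescopes three consecutive canonical levels so the lemma applies while keeping the cutting number at most $\tilde R^3$. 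The contradiction is then obtained in Proposition \ref{prop} without ever establishing a global tower matching: the ergodic theorem locates a single \emph{totally good} occurrence of $v_{n+1}$, all $r_n$ of whose $v_n$-sub-blocks force expected occurrences of a fixed short word $w_m$ with $|w_m|>S$, and the rigidity of expected occurrences (points (4) and (5) of Section \ref{comments}) propagates these to exhibit a forbidden pattern contradicting $s_n\perp s_n'$. To salvage your approach you would need to prove the exact-matching claim and carry out the offset analysis, and the latter essentially reproduces the paper's $\perp$ machinery and Lemma \ref{lemma}.
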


We remark that in \cite{GaoHill1}, the author and Su Gao have completely characterized when two symbolic rank-1 system are {\em topologically} isomorphic, and as a corollary have a complete characterization of when a symbolic rank-1 system is {\em topologically} isomorphic to its inverse.   A topological isomorphism between symbolic rank-1 systems is a homeomorphism between the underlying spaces that commutes with the shift.  Since each the underlying space of a symbolic rank-1 system admits at most one atomless, shift-invariant probability measure, every topological isomorphism between symbolic rank-1 systems is also a measure-theoretic isomorphism.  On the other hand, there are symbolic rank-1 systems that are measure-theoretically isomorphic, but not topologically isomorphic.



We note here the main difference between these two settings.  Suppose $\phi$ is an isomorphism--either a measure-theoretic isomorphism or a topological isomorphism--between two symbolic rank-1 systems $(X, \mu, \sigma)$ and $(Y, \nu, \sigma)$.  
Let $(v_n : n \in \N)$ and $(w_n : n \in \N)$ be generating sequences that gives rise to $(X, \mu, \sigma)$ and $(Y, \nu, \sigma)$, respectively.
One can consider a set $E_{w_m, 0} \subseteq Y$ and its pre-image, call it $A$, under $\phi$.  If $\phi$ is a measure-theoretic isomorphism then one can find some $E_{v_n, i}$ so that $$\frac{\mu (E_{v_n, i} \cap A)}{\mu(E_{v_n, i}) } > 1 - \epsilon.$$
However, if $\phi$ is in fact a topological isomorphism, then one can find some $E_{v_n, i}$ so that $$E_{v_n, i} \subseteq A.$$
The stronger condition in the case of a topological isomorphism is what makes possible the analysis done by the author and Gao in \cite{GaoHill1}.  In this paper, we are able to use the weaker condition, together with certain ``bounded" conditions on the generating sequences $(v_n : n \in \N)$ and $(w_n: n \in \N)$ to achieve our results.

 


\section{Arguments}

We begin with a short subsection introducing two new pieces of notation.  Then we prove a general proposition that can be used to show that certain symbolic rank-1 measure-preserving transformations are not isomorphic.  Finally, we show how to use the general proposition to prove the non-trivial direction of Theorem \ref{theorem}.

\subsection{New notation}

The first new piece of notation is $*$, a binary operation on all finite sequences of natural numbers.  The second is $\perp$, a relation (signifying incompatibility) between finite sequences of natural numbers that have the same length. 

{\bf The notation $*$:}  We will first describe the reason for introducing this new notation.  We will then then give the formal definition of $*$ and then illustrate that definition with an example.  Suppose $(r_n :n \in \N)$ and $(s_n: n \in \N)$ are cutting and spacer parameters for the symbolic system $(X, \mu, \sigma)$ and that $(v_n: n \in \N)$ is the generating sequence corresponding to those parameters.  Fix $n_0 > 0$ and consider the generating sequence $(w_n : n \in \N)$, defined as follows.
$$w_n = 
\begin{cases}
v_n, \quad &\text{ if } n < n_0 \\
v_{n+1}, \quad &\text{ if }  n \geq n_0\\
\end{cases}
$$
It is clear that $(w_n: n \in \N)$ is a subsequence of $(v_n: n \in \N)$, missing only the element $v_{n_0}$; thus, $(w_n: n \in \N)$ gives rise to the same symbolic system $(X, \mu, \sigma)$.  We would like to be able to easily describe the cutting and spacer parameters that correspond to the generating sequence $(w_n:n \in \N)$.  Let $(r_n^\prime :n \in \N)$ and $(s_n^\prime : n \in \N)$ be those cutting and spacer parameters.  It is clear that for $n< n_0$ we have $r_n^\prime  = r_n$ and $s_n^\prime  = s_n$.  It is also clear that for $n>n_0$ we have $r_n^\prime  = r_{n+1}$ and $s_n^\prime  = s_{n+1}$.  It is straightforward to check that $r_{n_0}^\prime = r_{n_0+1} \cdot r_{n_0}$.  The definition below for $*$ is precisely what is needed so that $s_{n_0}^\prime = s_{n_0+1} * s_{n_0}$.

Here is the definition. Let $s_1$ be any function from $\{1, 2, \ldots, r_1 -1\}$ to $\N$ and let $s_2$ be any function from $\{1, 2, \ldots, r_2 -1\}$ to $\N$.  We define $s_2 * s_1$, a function from $\{1, 2, \ldots,  r_2 \cdot r_1 -1\}$ to $\N$, as follows.
$$(s_2 *s_1 )(i) = 
\begin{cases}
s_1(k), \quad &\text{ if } 0 < k < r_1  \text{ and } i \equiv k \mod r_1 \\
s_2(i/r_1), \quad &\text{ if } i \equiv 0 \mod r_1 \\
\end{cases}
$$
It is important to note, and straightforward to check, that the operation $*$ is associative. 

To illustrate, suppose that $s_1$ is the function from $\{1,2,3\}$ to $\N$ with $s_1 (1) = 0$, $s_1 (2) = 1$, and $s_1 (3) = 0$ and that $s_2$ is the function from $\{1,2\}$ to $\N$ such that $s_2 (1) = 5$ and $s_2(2) = 6$; we abbreviate this by simply saying that $s_1 = (0,1,0)$ and $s_2 = (5,6)$.  Then $s_2 * s_1 = (0,1,0, 5, 0,1,0,6,0,1,0)$. 

{\bf The notation $\perp$:}  Suppose $s$ and $s^\prime$ are both functions from $\{1, 2, \ldots, r-1\}$ to $\N$.  We say that $s$ is {\em compatible} with $s^\prime$ if there exists a function $c$ from $\{1\}$ to $\N$ so that $s$ is a subsequence of $c*s^\prime$.   Otherwise we say that $s$ is incompatible with $s^\prime$ and write $s\perp s^\prime$. 

To illustrate, consider $s = (0,1,0)$ and $s^\prime = (0,0,1)$.  Then $s$ is compatible with $s^\prime$ because if $c = 0$, then $c * s^\prime = (0,0,1,0,0,0,1)$ and $(0,1,0)$ does occur as a subsequence of $(0,{ 0,1,0},0,0,1)$.  If $s^{\prime\prime} = (0,1,2)$, then $s^{\prime}$ is compatible with $s^{\prime\prime}$ (again let $c=0$), but $s \perp s^{\prime\prime} $, because $(0,1,0)$ can never be a subsequence of $(0,1,2,c,0,1,2)$.


Though not used in our arguments, it is worth noting, and is straightforward to check, that $s \perp s^\prime$ iff $s^\prime \perp s$. (It is important here that $s$ and $s^\prime$ have the same length.)

We now state the main point of this definition of incompatibility.  This fact will be crucial in the proof of Proposition 2.1.  Suppose $(r_n^\prime : n \in \N)$ and $(s_n^\prime : n \in \N)$ are cutting and spacer parameters associated to the symbolic rank-1 measure-preserving transformation $(Y, \nu, \sigma)$ and that $(w_n : n \in \N)$ is the generating sequence associated to those parameters.  
If $n$ is such that $r_n = r_n^\prime$ and $s_n \perp s_n^\prime$, then no element of $y \in Y$ contains an occurrence of $$w_n 1^{s_n (1)} w_n 1^{s_n(2)} \ldots 1^{s_n(r_n -1)} w_n$$ where each of the demonstrated occurrence of $w_n$ is expected.   

Indeed, suppose that that beginning at position $i$, some $y \in Y$ did have such an occurrence of $w_n 1^{s_n (1)} w_n 1^{s_n(2)} \ldots 1^{s_n(r_n -1)} w_n$.  The expected occurrence of $w_n$ beginning at $i$ must be completely contained in some expected occurrence of $w_{n+1}$, say that begins at position $j$.  We know that the expected occurrence of $w_{n+1}$ beginning at position $j$ contains exactly $r_n$-many expected occurrences of $w_n$.  Let $1 \leq l \leq r_n$ be such that the expected occurrence of $w_n$ beginning at position $i$ is the $l$th expected occurrence of $w_n$ beginning at position $j$.  If $l=1$, then $s_n = s_n^\prime$, which implies that $s_n$ is a subsequence of $c*s_n^\prime$ for any $c$.  If, on the other hand, $1< l \leq r_n$, then letting $c = s_n (r_n - l +1)$, we have that $s_n$ is a subsequence of $c*s_n^\prime$.  In either case this would result in $s_n$ being compatible with $s_n^\prime$.

\subsection{A general proposition guaranteeing non-isomorphism}

\begin{proposition}
\label{prop}
Let $(r_n: n \in \N)$ and $(s_n: n \in \N)$ be the cutting and spacer parameters for a symbolic rank-1 system $(X, \mu, \sigma)$ and let $(r^\prime_n: n \in \N)$ and $(s^\prime_n: n \in \N)$ be the cutting and spacer parameters for a symbolic rank-1 system $(Y, \nu, \sigma)$.  Suppose the following hold.  
\begin{enumerate}
\item  For all $n$, $r_n = r^\prime_n$ and $\displaystyle \sum_{0 < i < r_n} s_{n}(i) = \sum_{0 < i < r_n} s^\prime_{n}(i)$.
\item  There is an $S \in \N$ such that for all $n$ and all $0 < i < r_n$, $$s_n(i) \leq S \textnormal{ and } s^\prime_n(i) \leq S.$$
\item  There is an $R \in \N$ such that for infinitely many $n$, $$r_n \leq R \textnormal{ and } s_n \perp s^\prime_n.$$
\end{enumerate}   
Then $(X, \mu, \sigma)$ and $(Y, \nu, \sigma)$ are not measure-theoretically isomorphic. 
\end{proposition}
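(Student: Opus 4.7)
The plan is to suppose $\phi:(X,\mu,\sigma)\to(Y,\nu,\sigma)$ is a measure-theoretic isomorphism and derive a contradiction with the ``$\perp$-fact'' stated at the end of Section 2.1. First I would do the routine setup: induction using condition (1) gives $|v_n|=|w_n|$ for every $n$, and bounded spacers (condition (2)) combined with $\sum_i s_n(i)=\sum_i s_n'(i)$ further yield $\mu(E_{v_n,0})=\nu(E_{w_n,0})$ for every $n$ (both equal $p_0/(r_0\cdots r_{n-1})$ for a common constant $p_0$, because $|v_n|\mu(E_{v_n,0})\to 1$ and $|w_n|\nu(E_{w_n,0})\to 1$ have equal rates). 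Then pigeonhole on condition (3) to extract an infinite $M\subseteq\N$ on which $r_n=r$ is a fixed constant $\leq R$ and $s_n\perp s_n'$.

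The central step is a rigidity assertion: for every sufficiently large $n\in M$ there is an integer $c_n$ with $\phi(E_{v_n,i})=E_{w_n,i+c_n}$ modulo a null set for every $i\in\Z$. To prove this I would introduce the a.e.-defined ``phase'' $\pi_n(x)=i$ when $x\in E_{v_n,i}$ for $i\in\{0,\dots,|v_n|-1\}$, and its $Y$-analogue $\pi_n'$, and note the shift-equivariance $\pi_n\circ\sigma\equiv\pi_n-1\pmod{|v_n|}$ where defined (and likewise for $\pi_n'$). The difference $\pi_n'\circ\phi-\pi_n$ is then shift-invariant wherever defined; extending it via Poincar\'e recurrence to a genuine a.e.-defined shift-invariant function and invoking ergodicity of $(X,\mu,\sigma)$ produces the constant $c_n$.

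With $c_n$ in hand, let $p_1=0<p_2<\cdots<p_r$ be the positions of the $r$ expected $v_n$'s inside the expected $v_{n+1}$ at $0$, so $p_{k+1}-p_k=|v_n|+s_n(k)$. Then $E_{v_{n+1},0}\subseteq\bigcap_{k=1}^r E_{v_n,p_k}$, hence $\phi(E_{v_{n+1},0})\subseteq\bigcap_{k=1}^r E_{w_n,p_k+c_n}$ up to a null set. Any $y$ in the latter intersection would contain an occurrence of $w_n 1^{s_n(1)}w_n 1^{s_n(2)}\cdots 1^{s_n(r-1)}w_n$ starting at position $c_n$ with each $w_n$ expected; the $\perp$-fact (using $r_n=r_n'=r$ and $s_n\perp s_n'$) forbids this, so $\nu(\bigcap_k E_{w_n,p_k+c_n})=0$ while $\nu(\phi(E_{v_{n+1},0}))=\mu(E_{v_{n+1},0})>0$, the required contradiction.

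I expect the ergodicity step to be the main obstacle. Because $\pi_n$ is only defined off the positive-measure ``level-$n$ spacer'' set, $\pi_n'\circ\phi-\pi_n$ is shift-invariant only on a subset of $X$ that is itself not shift-invariant, blocking a naive appeal to ergodicity. The Poincar\'e-recurrence extension addresses this in principle, producing a genuine shift-invariant extension whose almost-constant value is the desired $c_n$; one then has to check that the resulting exception set within $\phi(E_{v_{n+1},0})$ is strictly smaller than $\mu(E_{v_{n+1},0})$, which is where the interplay between conditions (2) and (3) and the choice of large $n\in M$ enters. Equivalently, one may phrase this step as uniqueness (up to a constant shift) of factor maps $(X,\mu,\sigma)\to(\Z/|v_n|\Z,\text{Haar},\text{rotation by }-1)$.
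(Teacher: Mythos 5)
Your overall strategy (locate an aligned block of $r_n$ expected $w_n$'s in the image and contradict the $\perp$-fact) matches the paper's, but the central rigidity assertion on which your argument rests --- that for large $n$ there is a constant $c_n$ with $\phi(E_{v_n,i})=E_{w_n,i+c_n}$ modulo null sets --- is both unprovable by your method and false in general. An exact statement of this form is essentially what characterizes a \emph{topological} isomorphism; as the paper stresses in Section 1.3, a measure-theoretic isomorphism only yields the approximate version $\mu(E_{v_n,i}\cap A)/\mu(E_{v_n,i})>1-\epsilon$. Your proposed proof via the ``phase'' $\pi_n$ does not work: the relation $\pi_n\circ\sigma\equiv\pi_n-1\pmod{|v_n|}$ fails precisely when the orbit crosses a spacer of positive length, at which point the phase resets to the start of the next expected occurrence rather than continuing its arithmetic progression. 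Consequently $\pi_n$ is not (even partially) a cocycle into $\Z/|v_n|\Z$, the difference $\pi_n'\circ\phi-\pi_n$ is not invariant under the return map to its domain of definition (its increments over an excursion record the spacer patterns of $X$ and $Y$ separately, which is exactly the data you are trying to compare), and no factor map onto a rotation of $\Z/|v_n|\Z$ exists in general --- many rank-1 transformations, e.g.\ Chacon's, are weakly mixing and admit no rotation factors at all. The Poincar\'e-recurrence patch you anticipate cannot repair this, because the obstruction is not the domain of definition but the genuine non-invariance of the difference.

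The paper's proof shows how to get by with only the approximate statement. One fixes a single \emph{small} index $m$ with $|w_m|>S$ and finds $n$ in the infinite set of condition (3) and a $k$ with $\mu(E_{v_n,k}\cap\phi^{-1}(E_{w_m,0}))/\mu(E_{v_n,k})>1-1/R$; the ergodic theorem and $r_n\leq R$ then produce a ``totally good'' expected occurrence of $v_{n+1}$, all of whose $r_n$ expected occurrences of $v_n$ force expected occurrences of $w_m$ in the image at a common offset. The alignment you hoped to get from rigidity is instead recovered combinatorially: since consecutive forced occurrences of $w_m$ are separated by gaps differing from the gaps between expected $w_n$'s by at most $S<|w_m|$, non-overlapping of expected occurrences of $w_m$ forces the spacer between consecutive expected $w_n$'s in $\phi(x)$ to equal $s_n(j)$ exactly, and iterating this $r_n-1$ times produces the forbidden pattern $w_n1^{s_n(1)}w_n\cdots1^{s_n(r_n-1)}w_n$. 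You would need to replace your rigidity step with an argument of this kind (this is where conditions (2) and (3) genuinely interact) for the proof to go through.
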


\begin{proof}
Let $(v_n: n \in \N)$ be the generating sequence associated to the cutting and spacer parameters $(r_n: n \in \N)$ and $(s_n: n \in \N)$.  Let $(w_n: n \in \N)$ be the generating sequence associated to the cutting and spacer parameters $(r^\prime_n: n \in \N)$ and $(s^\prime_n: n \in \N)$.  Condition (1) implies that for all $n$, $|v_n| = |w_n|$.  Now suppose, towards a contradiction, that $\phi$ is an isomorphism between $(X, \mu, \sigma)$ and $(Y, \nu, \sigma)$.    

First, choose $m \in \N$ so that $|v_m| = |w_m|$ is greater than the $S$ from condition (2).  Next, consider the positive $\mu$-measure set $$\phi^{-1} (E_{w_m, 0}) = \{x \in X: \phi (x) \text{ has an expected occurrence of $w_m$ at 0}\}.$$  Let $\mathbb{M} = \{n \in \N: \textnormal{$r_n \leq R $ and $s_n \perp s^\prime_n$}\}$, where $R$ is from condition (3), and note that $\mathbb{M}$ is infinite.  We can then find $n \in \mathbb{M}$ and $k \in \N$ such that $$\displaystyle \frac{\mu (E_{v_n, k} \cap \phi^{-1} (E_{w_m, 0}))}{\mu(E_{v_n, k}) } > 1- \frac{1}{R}.$$


One can loosely describe the above inequality by saying:  Most $x \in X$ that have an expected occurrence of $v_n$ beginning at position $k$ are such that $\phi (x)$ has an expected occurrence of $w_m$ beginning at position 0.

We say that an expected occurrence of $v_n$ in any $x \in X$ (say it begins at $i$) is a {\em good} occurrence of $v_n$ if $\phi (x)$ has an expected occurrence of $w_m$ beginning at position $i-k$.  In this case we say the good occurrence of $v_n$ beginning at $i$ {\em forces} the expected occurrence of $w_m$ beginning at position $i-k$.  Note that an expected occurrence of $v_n$ beginning at position $i$ in $x \in X$ is good iff $\sigma^{i} (x) \in E_{v_n, k} \cap \phi^{-1} (E_{w_m, 0})$, since $\phi$ commutes with $\sigma$.  A simple application of the ergodic theorem shows that $\mu$ almost every $x \in X$ satisfies $$\displaystyle \lim_{N \rightarrow \infty} \frac{|\{i \in [-N,N] : \textnormal{ $x$ has a good occurrence of $v_n$ at $i$}\}|}{|\{i \in [-N,N] : \textnormal{ $x$ has an expected occurrence of $v_n$ at $i$}\}|} >  1- \frac{1}{R}.$$
Since $ r_n \leq R$, this implies that $\mu$ almost every $x \in X$ contains an expected occurrence of $v_{n+1}$ such that each of the $r_n$-many expected occurrences of $v_n$ that it contains is good.  We say that such an occurrence of $v_{n+1}$ is {\em totally good}.

Let $x\in X$ and $i \in \Z$ be such that $x$ has a totally good occurrence of $v_{n+1}$ beginning at $i$.  There are $r_n$-many expected occurrences of $v_n$ in the expected occurrence $v_{n+1}$ beginning at $i$ and each of them forces an expected occurrence of $w_m$ in $\phi(x)$.  The first of these forced expected occurrences of $w_m$ in $\phi (x)$ begins at position $i - k$ and must be part of some expected occurrence of $w_n$, say it begins at position $i^\prime$.  We claim that, in fact, $\phi (x)$ must have an occurrence of $$w_n 1^{s_n (1)} w_n 1^{s_n(2)} \ldots 1^{s_n(r_n -1)} w_n$$ beginning at $i^\prime$, where each of the demonstrated occurrence of $w_n$ is expected.  This will contradict the fact that $s_n \perp s_n^\prime$.

Proving the claim involves an argument that is repeated $r_n -1$ times.  The next paragraph contains the first instance of that argument, showing that the expected occurrence of $w_n$ beginning at $i^\prime$ in $\phi (x)$ is immediately followed by $1^{s_n(1)}$ and then another expected occurrence of $w_n$, this one containing the second forced occurrence of $w_m$.  The next instance of the argument would show that the expected occurrence of $w_n$ beginning at $i^\prime + |w_n| + s_n(1)$ is immediately followed by $1^{s_n(2)}$ and then another expected occurrence of $w_n$, this one containing the third forced occurrence of $w_m$.  After the $r_n -1$ instances of that argument, the claim would be proven.



Here is the first instance of the argument:  We know that $\phi (x)$ has an expected occurrence of $w_n$ beginning at $i^\prime$ and that this expected occurrence of $w_n$ contains the expected occurrence of $w_m$ beginning at position $$i-k = (i^\prime) + (i - k - i^\prime).$$   Thus, by point (5) of the remark about expected occurrences in Section \ref{comments},  if $j \in \Z$ is such that $\phi (x)$ has an expected occurrence of $w_n$ beginning at position $j$, that occurrence completely contains an expected occurrence of $w_m$ beginning at position $j + (i - k - i^\prime)$. 



The expected occurrence of $w_n$ beginning at position $i^\prime$ must be followed by $1^t$ and then another expected occurrence of $w_n$, for some $0 \leq t \leq S$.  The expected occurrence of $w_n$ beginning at position $i^\prime + |w_n| + t$ must contain an expected occurrence of $w_m$ beginning at position $$(i -k) + |w_n| + t =  (i^\prime + |w_n| + t) + (i - k - i^\prime).$$ 
But we also know that the expected occurrence of $v_n$ beginning at position $i + |v_n| + s_n(1)$ in $x$ forces an expected occurrence of $w_m$ beginning at $i + |w_n| + s_n(1)$ in $\phi (x)$.  Since $0 \leq s_n(1),t \leq S$ it must be that $0 \leq |t - s_n(1)| \leq S$ and thus, since $|w_m| > S$, the expected occurrences beginning at positions $i + |w_n| + s_n(1)$ and $i + |w_n| + t$ must overlap.  Since distinct expected occurrences of $w_m$ cannot overlap, it must be the case that $s_n(1)=t$  (See point (4) of the remark about expected occurrences in Section \ref{comments}).  Thus, the expected occurrence of $w_n$ beginning at $i^\prime$ in $\phi (x)$ is immediately followed by $1^{s_n(1)}$ and then another expected occurrence of $w_n$, this one containing the second forced occurrence of $w_m$.
\end{proof}

\subsection{Proving the theorem}

We start this subsection with a comment and a simple lemma.  The comment is that if $(r_n: n \in \N)$ and $(s_n: n \in \N)$ are the canonical cutting and spacer parameters for a symbolic rank-1 measure-preserving transformation $(X, \mu, \sigma)$, then for all $n \in \N$, $s_{n+1}*s_n$ is not constant.  This is simply a restatement, using the notation $*$, of the last sentence in the remark about canonical generating sequences in Section \ref{comments}.

\begin{lemma}
\label{lemma}
Let $s_1$ be a function from  $\{1, 2, \ldots, r_1-1\}$ to $\N$ such that $s_1 \neq \overline{s_1}$.  Let $s_2$ be function from $\{1, 2, \ldots, r_2-1\}$ to $\N$ that is not constant.    Then $s_2 * s_1 \perp \overline{s_2 * s_1}$.
\end{lemma}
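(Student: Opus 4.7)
The plan is to carry out the proof by direct computation and case analysis on the position of the alleged occurrence. My first step would be to verify the key identity $\overline{s_2 * s_1} = \overline{s_2} * \overline{s_1}$. This is a routine check using the defining formula for $*$ together with $\overline{s}(i) = s(r-i)$: positions $i \not\equiv 0 \pmod{r_1}$ get sent by reversal to positions $r_1 r_2 - i \not\equiv 0 \pmod{r_1}$ whose value is $s_1(r_1 - (i \bmod r_1)) = \overline{s_1}(i \bmod r_1)$, and positions $i = k r_1$ get sent to $(r_2 - k) r_1$, whose value is $s_2(r_2 - k) = \overline{s_2}(k)$. So the question reduces to showing $s_2 * s_1 \perp \overline{s_2} * \overline{s_1}$.

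Next, I would suppose toward a contradiction that there is some $c : \{1\} \to \N$ such that $s_2 * s_1$ occurs as a contiguous subword of $c * (\overline{s_2}*\overline{s_1})$, starting at some position $p$ with $0 \le p \le r_1 r_2$. Writing $c * (\overline{s_2}*\overline{s_1})$ explicitly as $2 r_2$ blocks of $\overline{s_1}$ of length $r_1 - 1$, separated by the $2 r_2 - 1$ entries $\overline{s_2}(1),\ldots,\overline{s_2}(r_2-1), c, \overline{s_2}(1),\ldots,\overline{s_2}(r_2-1)$, the separator positions inside this ambient sequence are precisely those $\equiv 0 \pmod{r_1}$. The analysis then splits on $q := p \bmod r_1$.

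In the case $q = 0$, the separator positions of $s_2 * s_1$ align with separator positions of the ambient sequence, so the $r_2$ consecutive blocks of $s_1$ appearing in $s_2 * s_1$ must each equal a block of $\overline{s_1}$ in the ambient sequence. This forces $s_1 = \overline{s_1}$, contradicting our hypothesis. In the case $0 < q < r_1$, the separator positions of $s_2 * s_1$ land at positions $p + k r_1$ with $p + k r_1 \equiv q \not\equiv 0 \pmod{r_1}$ for $k = 1, \ldots, r_2 - 1$; these are all interior to some block of $\overline{s_1}$ (in particular they avoid the unique $c$-position, which sits at $r_1 r_2 \equiv 0$), and each such interior position carries the value $\overline{s_1}(q)$. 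This forces $s_2(k) = \overline{s_1}(q)$ for every $1 \le k \le r_2 - 1$, i.e.\ $s_2$ is constant, contradicting our hypothesis on $s_2$.

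I don't expect any real obstacle: the proof is a bookkeeping argument once the identity $\overline{s_2 * s_1} = \overline{s_2}*\overline{s_1}$ is in hand. The only subtlety to be careful about is verifying in the second case that none of the forced separator positions coincide with the special $c$-slot at position $r_1 r_2$, which is automatic because $q \neq 0$. Thus both cases yield contradictions and $s_2 * s_1 \perp \overline{s_2 * s_1}$.
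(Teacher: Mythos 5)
Your proof is correct and follows essentially the same route as the paper's: assume compatibility, use the identity $\overline{s_2 * s_1} = \overline{s_2} * \overline{s_1}$, and split into cases according to the offset modulo $r_1$, with the aligned case forcing $s_1 = \overline{s_1}$ and the misaligned case forcing constancy. The only (immaterial) difference is in the second case, where you track the separator positions of $s_2 * s_1$ landing inside blocks of $\overline{s_1}$ to conclude $s_2$ is constant, whereas the paper tracks interior positions of $s_2 * s_1$ landing on separators of the ambient word to conclude $\overline{s_2}$ is constant.
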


\begin{proof}
Suppose, towards a contradiction, that $s_2 * s_1$ is compatible with $\overline{s_2 * s_1}$.  Then there is function $c$ from $\{1\}$ to $\N$ so that $s_2 * s_1$ is a subsequence of $$c * (\overline{s_2 * s_1}) = c * (\overline{s_2} * \overline{s_1}) = (c * \overline{s_2}) * \overline{s_1}.$$  In other words, there is some $0 \leq k \leq r_2 \cdot r_1$ such that for all $0 < l < r_2 \cdot r_1$, $(s_2 * s_1)(l) = ((c * \overline{s_2}) * \overline{s_1})(k+l) $.  We now have two cases.

Case 1:  $k \equiv 0 \mod r_1$.  Then for all $0<i<r_1$, $$s_1(i) = (s_2 * s_1) (i) = ((c * \overline{s_2}) * \overline{s_1}) (k+i) = \overline{s_1} (i).$$  Thus $s_1 = \overline{s_1}$, which is a contradiction.

Case 2:  There is some $0<m<r_1$ such that $k+m \equiv 0 \mod r_1$.  For all $0 \leq d < r_2$, we have $(s_2 * s_1)(m + dr_1) = s_1 (m) $.  But also, $$(s_2 * s_1)(m + dr_1) = ((c * \overline{s_2}) * \overline{s_1}) (k + m + dr_1) = (c * \overline{s_2}) \left(\frac{k + m}{r_1} + d\right) .$$  This implies that the function $c*\overline{s_2}$ is constant (taking the value $s_1 (m)$) for $r_2$-many consecutive inputs.  This implies that $\overline{s_2}$ must be constant, which is a contradiction.

\end{proof}


We will now prove the non-trivial direction of the theorem.  

\begin{proof}
Let $(\tilde{r}_n: n \in \N)$ and $(\tilde{s}_n: n \in \N)$ be the canonical cutting and spacer parameters for a symbolic rank-1 measure-preserving transformation $(X, \mu, \sigma)$.  Suppose both parameters are bounded; let $\tilde{R}$ be such that for all $n \in \N$, $\tilde{r}_n \leq \tilde{R}$ and let $S$ be such that for all $n \in \N$ and all $0<i<\tilde{r}_n$, $\tilde{s}_n(i) \leq S$.   Also, assume that for infinity many $n$, $\tilde{s}_n \neq \overline{\tilde{s}_n}$.  To prove the non-trivial direction of the theorem, we need to show that $(X, \mu, \sigma)$ is not isomorphic to its inverse.

 Let $(u_n :n \in \N)$ be the generating sequence corresponding to the parameters $(\tilde{r}_n: n \in \N)$ and $(\tilde{s}_n: n \in \N)$.  We will now describe a subsequence $(v_n: n \in \N)$ of $(u_n:n \in \N)$ and let $(r_n:n \in \N)$ and $(s_n: n \in \N)$ be the cutting and spacer parameters corresponding to the generating sequence $(v_n: n \in \N)$ (which also gives rise to $(X, \mu, \sigma)$).  First, let $v_0 = u_0 =0$.  Now, suppose $v_{2n}$ has been defined as $u_{k}$.  Let $m>k$ be as small as possible so that $\tilde{s}_m \neq \overline{\tilde{s}_m}$, and define $v_{2n+1} = u_m$ and $v_{2n+2} = u_{m+3}$.  It is very important to note here that $$r_{2n+1} = \tilde{r}_{m+2} \cdot \tilde{r}_{m+1} \cdot \tilde{r}_{m} $$
and that $$s_{2n+1} =  \tilde{s}_{m+2} *  \tilde{s}_{m+1} *  \tilde{s}_{m}. $$  This has two important consequences.  First, we have that for $n \in \N$, $r_{2n+1} \leq \tilde{R}^3$.  By the remark before Lemma \ref{lemma}, we also have that $\tilde{s}_{m+3} * \tilde{s}_{m+2} $ is not constant and thus, by Lemma \ref{lemma}, $\tilde{s}_{m+3} * \tilde{s}_{m+2} *  \tilde{s}_{m+1} \perp \overline{\tilde{s}_{m+3} * \tilde{s}_{m+2} *  \tilde{s}_{m+1} }$; put another way, $s_{2n+1} \perp \overline{s_{2n+1}}$.

Now for each $n$, let $r_n^\prime = r_n$ and $s_n^\prime = \overline{s_n}$.  Let $(Y, \nu, \sigma)$ be the symbolic rank-1 transformation corresponding to the cutting and spacer parameters $(r_n^\prime : n \in \N)$ and $(s_n^\prime : n \in \N)$.  As mentioned in the remark on rank-1 inverses at the end of Section \ref{comments}, the transformation $(Y, \nu, \sigma)$ is isomorphic to the inverse of $(X, \mu, \sigma)$.  Thus to show that $(X, \mu, \sigma)$ is not isomorphic to its inverse, we can show that $(X, \mu, \sigma)$ and $(Y, \nu, \sigma)$ are not isomorphic.

To do this we will apply Proposition \ref{prop}.  We need to check that the following three conditions hold.
\begin{enumerate}
\item  For all $n$, $r_n = r^\prime_n$ and $\displaystyle \sum_{0 < i < r_n} s_{n}(i) = \sum_{0 < i < r_n} s^\prime_{n}(i)$.
\item  There is an $S \in \N$ such that for all $n$ and all $0 < i < r_n$, $$s_n(i) \leq S \textnormal{ and } s^\prime_n(i) \leq S.$$
\item  There is an $R \in \N$ such that for infinitely many $n$, $$r_n \leq R \textnormal{ and } s_n \perp s^\prime_n.$$
\end{enumerate}   

Condition (1) above follows immediately from the fact that for all $n \in N$, $r_n^\prime = r_n$ and $s_n^\prime = \overline{s_n}$.  Condition (2) follows from the fact that each $s_n(i)$ is equal to some $\tilde{s}_m (j)$ which is less than or equal to $S$.  Finally, to verify condition (3), let $R=\tilde{R}^3$ and note that, as remarked above, for all $n \in \N$ we have that $r_{2n+1} \leq \tilde{R}^3$ and $s_{2n+1} \perp \overline{s_{2n+1}}$.  We now apply Proposition \ref{prop} and conclude that $(X, \mu, \sigma)$ and $(Y, \nu, \sigma)$ are not isomorphic.  Thus $(X, \mu, \sigma)$ is not isomorphic to its inverse. 
\end{proof}

\bibliographystyle{amsalpha}

\end{document}